\newtheorem{theorem}{Theorem}
\newtheorem{lemma}[theorem]{Lemma}
\newtheorem{corollary}[theorem]{Corollary}
\begin{document}

\title{The Diameter of Sum Basic Equilibria Games}
%    Authors information

\author{Aida Abiad\thanks{\texttt{a.abiad.monge@tue.nl}, Department of Mathematics and Computer Science, Eindhoven University of Technology, The Netherlands}
\thanks{Department of Mathematics: Analysis, Logic and Discrete Mathematics, Ghent University, Belgium}\thanks{Department of Mathematics and Data Science of Vrije Universiteit Brussel, Belgium} \qquad Carme Alvarez \thanks{\texttt{alvarez@cs.upc.edu},  Computer Science Department, Polytechnic University of Catalonia, Spain} \qquad Arnau Messegu\'e\thanks{\texttt{arnau.messegue@upc.edu},  Computer Science Department, Polytechnic University of Catalonia, Spain}}

\date{}
 
\maketitle   

%    Add more authors as above if necessary

%\paperthanks{\footnote{This work has been partially supported by funds from the Spanish Ministry for Economy and Competitiveness (MINECO) and the European Union (FEDER funds) under grant GRAMM (TIN2017-86727-C2-1-R) and from the Catalan Agency for Management of University and Research Grants (AGAUR, Generalitat de Catalunya) under project ALBCOM 2017-SGR-786}}

%\makepapertitle

%%%A 3-4 lines abstract is mandatory
\begin{abstract}
A graph $G$ of order $n$ is said to be a \emph{sum basic equilibrium} if and only if for every edge $uv$ from $G$ and any node $v'$ from $G$, when performing the swap of the edge $uv$ for the edge $uv'$, the sum of the distances from $u$ to all the other nodes is not strictly reduced. This concept lies in the heart of the so-called network creation games, where the central problem is to understand the structure of the resulting equilibrium graphs, and in particular, how well they globally minimize the diameter. It was shown in [Alon, Demaine,  Hajiaghayi, Leighton, \emph{SIAM J. Discrete Math.} 27(2), 2013] that the diameter of sum basic equilibria is $2^{O(\sqrt{\log n})}$ in general, and at most $2$ for trees. In this paper we show that the upper bound of $2$ can be extended to bipartite graphs, and that it also holds for some nonbipartite classes like block graphs and cactus graphs. 

%\keywords{Network Creation Game, Sum Basic, Diameter, Nash Equilibrium}
\end{abstract}

%    Text of article.

%    Numbered section headings

%%%%%%%%%%%%%%%%%%%%%%%%%%%%%%%%%%%%%%%%%%%%%%%%%%%%%%%%%%%%%%%%%%%%%%%%%%%%%%%%%%
\section{Introduction}
%%%%%%%%%%%%%%%%%%%%%%%%%%%%%%%%%%%%%%%%%%%%%%%%%%%%%%%%%%%%%%%%%%%%%%%%%%%%%%%%%%
\textbf{Definition of the model and context.} First of all, let us introduce some terminology that will help us to clarify the notion of a \emph{sum basic equilibrium network}. Let $G$ be a connected and undirected graph of size $n$, and let $u$ be a node from $G$. A \emph{deviation} in $u$ is any swap of an edge $uv$ from $G$ for any other edge $uv'$ with $v' \neq u,v$ any other node from $G$. The \emph{deviated graph} associated to any  such deviation is the resulting graph obtained after applying the swap. Furthermore, the \emph{cost difference} associated to any deviation in $u$ is  the difference between the sum of distances from $u$ to all the other nodes in the deviated graph minus the sum of the distances from $u$ to all the other nodes in the original graph. A connected and undirected graph $G$ is a \emph{sum basic equilibrium} iff for every node $u$ in $G$ the cost difference associated to every possible deviation in $u$  is non-negative. 

This notion of a sum basic equilibrium was first introduced by Alon et al. in 2010 \cite{AlonDHL:10}
 and is inspired by the \emph{sum classical network creation game}, which was introduced by Fabrikant et al. in 2003 \cite{Fe:03}. In this model, the sum classical network creation game, two parameters are considered: $n$ the size of the network and $\alpha$ the price of buying any single link. One of the main interests in the sum classical network creation game is to study the price of anarchy, a measure that quantifies the loss of efficiency of a system due to the selfish behavior of its agents. The price of anarchy, provides a quantitative understanding of the behaviour of such Internet-like networks and, interestingly, up until now it has been shown that for almost any $\alpha$ the price of anarchy is asymptotically constant \cite{Demaineetal:07, Alvarezetal3}. One of the results used to prove this is that the price of anarchy is upper bounded by the diameter of equilibrium networks plus one unit \cite{Demaineetal:07}. Hence, proving that the diameter of equilibria is small allows us to deduce that the price of anarchy is small too. This is one of the reasons that explain why we are interested in finding the best possible non-trivial upper bounds for the diameter of equilibrium networks. 
 
In this regard, one of the most important contributions by Alon et al. \cite{AlonDHL:10} is a general upper bound on the diameter of any sum basic equilibrium of $2^{O(\sqrt{ \log n})}$. However, this bound can be dramatically reduced if we restrict to the tree topology, in which case the diameter is shown to be at most $2$:

\vskip 5pt
\textbf{Theorem 1 \cite{AlonDHL:10}}.  \label{thm:basic} \emph{If a sum equilibrium graph in the basic
network-creation game is a tree, then it has diameter at
most 2, and thus is a star}.
\vskip 5pt

Moreover, in \cite{AlonDHL:10}, the authors establish a connection between sum basic equilibria of diameter larger than $2\log n$ and \emph{distance-uniform graphs}. While the authors conjecture that distance-uniform graphs have logarithmic diameter, which would imply poly-logarithmic diameter for sum basic equilibria, Lavrov et al. \cite{LaLo:17} later refute this conjecture. Later, Nikoletseas et al. \cite{Structurebasic} use the probability principle to establish structural properties of sum basic equilibria. From some of these properties, it is shown that in some extremal situations, like when the maximum degree of the equilibrium network is at least $n/\log^l n$ with $l > 0$, 
the diameter is polylogarithmic.

\begin{comment}
Furthermore, in \cite{AlonDHL:10} the authors establish a connection between sum basic equilibria of diameter larger than $2\log n$ and \emph{distance-uniform graphs}. The authors then conjecture that distance-uniform graphs have logarithmic diameter which would imply, using this connection, that sum basic equilibria have poly-logarithmic diameter. The conjecture was later refuted by Lavrov et al.\cite{LaLo:17}. After some years, Nikoletseas et al. \cite{Structurebasic} use the probability principle to establish structural properties of sum basic equilibria. As a consequence of some of these properties, in some extremal situations, like when the maximum degree of equilibrium network is at least $n/\log^l n$ with $l > 0$, it is shown that the diameter is polylogarithmic. 
\end{comment}

\vskip 5pt

\textbf{Our Contribution.} In this work we extend Theorem \ref{thm:basic} to bipartite graphs, block graphs and cactus graphs, proving that the diameter of any such sum basic equilibrium networks is at most 2. 
This paper is structured as follows. In Section \ref{sec:localswap} we investigate the bipartite case. For this, our approach consists in considering any $2-$edge-connected component $H$ of a non-tree sum basic equilibrium $G$. We first consider all the collection of individual swaps $uv$ for $uv'$ for each $u,v,v' \in V(H)$ and $uv,vv' \in E(H)$. We show that if $diam(H )> 2$, then the sum of the cost differences of all these swaps will be $<0$, thus contradicting the fact that $G$ is a sum basic equilibrium.  In the end of this section, we study further elementary properties of any $2-$edge-connected component of any non-tree sum basic equilibrium that work in general which allow us to reach the conclusion.
In Section \ref{sec:other} we consider the non-bipartite case. In particular,  we analyse further properties that sum basic equilibrium block graphs and sum basic equilibrium cactus graphs satisfy, imposing that appropriate single-edge swaps cannot improve the sum of distances to all the nodes from the network if the diameter of such networks is greater than $2$.

\vskip 5pt

\textbf{Notation.} In this work we consider mainly undirected graphs $G$ for which we denote by $V(G),E(G)$ its corresponding sets of vertices and edges, respectively. 

Given an undirected graph $G$ and any pair of nodes $u,v$ from $G$ we denote by $d_G(u,v)$ the distance between $u,v$. In this way, $D(u)$ is the sum of distances from $u$ to all the other nodes, that is, $D(u)=\sum_{v \neq u}d_G(u,v)$ if $G$ is connected or $\infty$ otherwise. Moreover, if $Z$ is a subgraph of $G$, noted as $Z \subseteq G$, then we write $D_Z(u) = \sum_{v \in V(Z)}d_G(u,v)$.

Now let $H$ be a subgraph from $G$. The $i-$th distance layer in $H$ with respect $u$ is denoted as $\Gamma_{i,H}(u)=\left\{v \in V(H) \mid d_G(u,v)=i \right\}$. In particular, the neighbourhood of $u$ in $H$, the set of nodes from $V(H)$ at distance one with respect $u$, is $\Gamma_{1,H}(u)$. Furthermore, if $P$ is a property, we say that $H$ is a maximal subgraph of $G$ satisfying $P$ when for any other subgraph $H'$ of $G$, if $H'$ satisfies $P$ then $H \not \subseteq H'$. 

Then, an edge $e \in E(G)$ is said to be a \emph{bridge} if its removal increases the number of connected components from $G$ and a $2-$edge-connected component $H$ from $G$ is any maximal connected subgraph of $G$ not containing any bridge. Similarly, a node $v \in V(G)$ is a cut vertex iff its removal increases the number of connected components of $G$ and then a biconnected component $H$ from $G$ is any maximal connected subgraph of $G$ not containing any cut vertex. Finally, for a given $2$-edge-connected or biconnected component $H$ from $G$ and a vertex $u\in V(H)$, $W_H(u)$ is the connected component containing $u$ in the subgraph induced by the vertices $(V(G) \setminus V(H)) \cup \left\{u \right\}$.  

%\begin{figure}
%\begin{center}
%\includegraphics[scale=0.5]{2edgeconnected3.pdf}
%\end{center}
%\caption{}%In the undirected graph given in the picture, $H$ is a $2$-edge-connected component and for the node $u \in V(H_1)$ so that $W(u)$ corresponds to our previous definition.}
%\end{figure}

%%%%%%%%%%%%%%%%%%%%%%%%%%%%%%%%%%%%%%%%%%%%%%%%%%%%%%%%%%%%%%%%%%%%%%%%%%%%%%%%%%
\section{Bipartite graphs}
\label{sec:localswap}
%%%%%%%%%%%%%%%%%%%%%%%%%%%%%%%%%%%%%%%%%%%%%%%%%%%%%%%%%%%%%%%%%%%%%%%%%%%%%%%%%%

Given a non-tree bipartite sum basic equilibrium graph $G$, let $H$ be any of its $2-$edge-connected components. We first show that $diam(H)=2$. 

Given $u \in V(H)$ and $w\in V(G)$, we define $\delta^-_w(u)$ the subset of nodes $v$ from $\Gamma_{1,H}(u)$ such that $d_G(w,v) = d_G(w,u)-1$ and  $\delta^+_w(u)$ the subset of nodes $v$ from $\Gamma_{1,H}(u)$ such that $d_G(w,v) = d_G(w,u)+1$. Since $G$ is bipartite, for any $u\in V(H)$ and $w \in V(G)$,  $\delta_w^-(u) \cup \delta_w^+(u) = \Gamma_{1,H}(u)$.

%\begin{center}
%\includegraphics[scale=0.85]{sumbasicdelta-more3.pdf}
 %\end{center}

Moreover, given $u\in V(H)$ and $w\in V(G)$ such that $|\delta^-_w(u)| = 1$, we define $u_{w}^- \in \delta_w^-(u)$ to be the neighbour of $u$ in $H$ closer from $w$ than $u$. Recall that, for any $u\in V(H)$ and $w \in V(G)$, if $|\delta^-_w(u)| = 1$ then clearly $\delta_w^+(u) \neq \emptyset$ because $H$ is $2-$edge-connected.

%\begin{center}
%\includegraphics[scale=0.85]{sumbasicdelta-u-1.pdf}
 %\end{center}

%%%%

Now, let $u,v$ be nodes with $u\in V(H)$ and $v\in \Gamma_{1,H}(u)$. We define $S(u,v)$ to be the sum of the cost differences associated to the swaps of the edge $uv$ by the edges $uv'$ with $v' \in \Gamma_{1,H}(v) \setminus \left\{ u \right\} $ divided over $deg_H(v)-1$. Then we define $S = \sum_{u \in V(H)} \sum_{v \in \Gamma_{1,H}(u)}S(u,v)$. 

Let $u,v,w$ be nodes with $u\in V(H)$, $v\in \Gamma_{1,H}(u)$ and $w\in V(G)$ and define $\Delta_w(u,v)$ to be the sum of the distance changes from $u$ to $w$ due to the swaps of the edge $uv$ by the edges $uv'$ with $v' \in \Gamma_{1,H}(v) \setminus \left\{u\right\}$ divided over $deg_H(v)-1$. Furthermore, let $\Delta_w = \sum_{u \in V(H)} \sum_{v \in \Gamma_{1,H}(u)}\Delta_w(u,v)$. 

In this way we have that $S = \sum_{w \in V(G)} \Delta_w$. 

We first find a formula to compute the value of $\Delta_w(u,v)$ allowing us to obtain an expression for $\Delta_w$.

%%%%%%

%%%%%%%%%%%%%%%%%%%%

\begin{lemma} \label{lem:basic}
For any nodes $u,v \in V(H)$ and $w \in V(G)$ such that $v\in \Gamma_{1,H}(u)$, it holds 
\begin{equation*}
\Delta_w(u,v) = \begin{cases}
\frac{-|\delta_w^-(u_w^-)|+|\delta^+_w(u_w^-)|-1}{deg_H(v)-1} &\text{ if  $|\delta_w^-(u)|=1$ and $v=u_w^-$,}\\
 \frac{-|\delta_w^-(v)| }{deg_H(v)-1}  &\text{ if  $|\delta_w^-(u)|> 1$  and  $v\in \delta_w^-(u)$,} \\
 0 & \text{otherwise.} 
\end{cases}
\end{equation*}

\end{lemma}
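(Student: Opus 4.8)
The plan is to fix $u,v,w$, set $d=d_G(u,w)$, and for each swap target $v'\in\Gamma_{1,H}(v)\setminus\{u\}$ compute the distance change $d_{G'}(u,w)-d$ in $G'=(G-uv)+uv'$; summing over the $\deg_H(v)-1$ targets and dividing by $\deg_H(v)-1$ gives $\Delta_w(u,v)$. Two consequences of bipartiteness will be used throughout: every edge of $G$ joins two vertices whose distances to $w$ differ by exactly $1$ (otherwise $G$ has an odd cycle), which is why $\delta_w^-(z)\cup\delta_w^+(z)=\Gamma_{1,H}(z)$ for every $z$; and $G$ has no triangle, so $v'$, being adjacent to $v$ which is adjacent to $u$, is non-adjacent to $u$, whence $uv'\notin E(G)$, $d_G(u,v')=2$, and the swap genuinely adds an edge. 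Also $uv$ is not a bridge (it lies in the $2$-edge-connected $H$), so $G-uv$ is connected, and $\deg_H(v)\ge 2$.

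The computation rests on the identity $d_{G'}(u,w)=\min\{d_{G-uv}(u,w),\,1+d_{G-uv}(v',w)\}$: a simple $u$--$w$ path of $G'$ either avoids the new edge $uv'$, hence lies in $G-uv$, or uses $uv'$ as its first edge and then continues with a $v'$--$w$ path in $G-uv$. So I need $d_{G-uv}(u,w)$ and $d_{G-uv}(v',w)$. For the latter I claim that no shortest $v'$--$w$ path of $G$ uses the edge $uv$, so $d_{G-uv}(v',w)=d_G(v',w)$; indeed such a path would have length at least $\min\{d_G(v',u)+1+d_G(v,w),\,d_G(v',v)+1+d_G(u,w)\}$, and using $d_G(v',u)=2$, $d_G(v',v)=1$ one checks this exceeds $d_G(v',w)$ in every relevant case. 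Partitioning $\Gamma_{1,H}(v)\setminus\{u\}$ into $\delta_w^-(v)$ and $\delta_w^+(v)\setminus\{u\}$ (with $u\in\delta_w^+(v)$ and $u\notin\delta_w^-(v)$ in the cases where $v\in\delta_w^-(u)$) then determines $d_{G-uv}(v',w)$ exactly for each target.

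The case split mirrors the statement. When $v\in\delta_w^+(u)$ (the ``otherwise'' case), no shortest $u$--$w$ path begins with $uv$, so $d_{G-uv}(u,w)=d$; every target has $d_{G-uv}(v',w)\ge d$, so $d_{G'}(u,w)=d$ and each term is $0$. When $|\delta_w^-(u)|>1$ and $v\in\delta_w^-(u)$, a second vertex $v_2\in\delta_w^-(u)\setminus\{v\}$ provides a shortest $u$--$w$ path avoiding $uv$, so again $d_{G-uv}(u,w)=d$; targets in $\delta_w^-(v)$ then yield $d_{G'}(u,w)=d-1$ and targets in $\delta_w^+(v)\setminus\{u\}$ yield $d_{G'}(u,w)=d$, summing to $-|\delta_w^-(v)|$. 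When $|\delta_w^-(u)|=1$ and $v=u_w^-$, I first argue $w\notin W_H(u)$: otherwise $u$, being a cut vertex that separates $W_H(u)\setminus\{u\}$ from the rest of $G$, would force $d_G(z,w)=1+d$ for every $H$-neighbour $z$ of $u$, contradicting $\delta_w^-(u)=\{v\}\ne\emptyset$. Hence all neighbours of $u$ other than $v$ lie at distance $d+1$ from $w$ in $G$, so $d_{G-uv}(u,w)\ge d+2$; targets in $\delta_w^-(v)$ then give $d_{G'}(u,w)=d-1$ (change $-1$) and targets in $\delta_w^+(v)\setminus\{u\}$ give $d_{G'}(u,w)=d+1$ (change $+1$), summing to $-|\delta_w^-(v)|+(|\delta_w^+(v)|-1)$; dividing by $\deg_H(v)-1$ and recalling $v=u_w^-$ gives the stated formula.

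I expect the last case to be the main obstacle: it is the only place where the global structure of $G$ (rather than just $H$) matters, and one must be sure that $|\delta_w^-(u)|=1$ really forbids a shortcut from $u$ to $w$ outside $H$. The key point enabling this is that $W_H(u)$ is attached to the rest of $G$ only at $u$, which holds precisely because $H$ is a maximal $2$-edge-connected subgraph; everything else is careful distance arithmetic powered by bipartiteness.
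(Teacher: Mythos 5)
Your proof is correct and follows essentially the same route as the paper: a case analysis on whether $v\in\delta_w^+(u)$ or $v\in\delta_w^-(u)$, further split by $|\delta_w^-(u)|=1$ versus $|\delta_w^-(u)|>1$, with the key points being that bipartiteness forces every neighbour of $v$ into $\delta_w^-(v)\cup\delta_w^+(v)$ and that a second vertex in $\delta_w^-(u)$ (or the absence thereof) controls $d_{G-uv}(u,w)$. The paper's own proof is only a brief sketch of this same reasoning, so your fully worked-out distance computation (including the observation that $w\notin W_H(u)$ when $\delta_w^-(u)\neq\emptyset$) is a welcome, more rigorous rendering of the intended argument.
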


\begin{proof} If $v$ is further from $w$ than $u$, then clearly $\Delta_w(u,v) = 0$. Therefore, since $G$ is bipartite the remaining case is that $v$ is closer from $w$ than $u$. We can see clearly that we need to distinguish the cases $|\delta_w^-(u)| = 1$ with $v = u_w^-$ and the case $|\delta_w^-(u)| > 1$ with $v \in \delta_w^-(u)$. In the first case the corresponding sum of distance changes from $u$ to $w$ could get positive when the set of nodes $\delta_w^+(u_w^-)$ has size at least two. In contrast, in the second case the sum of distance changes is always no greater than zero because having at least another node distinct than $v$ in the subset $\delta_w^-(u)$ guarantees that when making the corresponding deviation the distance from $u$ to $w$ does not increase. 
\end{proof} 
%\begin{center}
%\includegraphics[scale=0.6]{sumbasicdelta-lemma1-2.pdf}
%\end{center}

\begin{theorem}
\label{thm:diamH} $diam(H) \leq 2$. 
\end{theorem}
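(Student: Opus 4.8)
The plan is to prove $\mathrm{diam}(H) \leq 2$ by contradiction: assume $\mathrm{diam}(H) > 2$ and show that $S < 0$. Since $S = \sum_{u \in V(H)} \sum_{v \in \Gamma_{1,H}(u)} S(u,v)$ is a weighted average of cost differences of genuine single-edge swaps available in $G$, and $G$ is a sum basic equilibrium, each $S(u,v) \geq 0$, hence $S \geq 0$; deriving $S < 0$ gives the contradiction. By the identity $S = \sum_{w \in V(G)} \Delta_w$ already established, it suffices to show $\Delta_w \leq 0$ for every $w \in V(G)$ and $\Delta_w < 0$ for at least one $w$.

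First I would fix $w \in V(G)$ and compute $\Delta_w = \sum_{u \in V(H)} \sum_{v \in \Gamma_{1,H}(u)} \Delta_w(u,v)$ using the case analysis of Lemma~\ref{lem:basic}. Only two kinds of terms contribute. The ``$|\delta_w^-(u)| > 1$'' terms each contribute $-|\delta_w^-(v)|/(deg_H(v)-1)$ and are always $\leq 0$. The delicate terms are those with $|\delta_w^-(u)| = 1$ and $v = u_w^-$: here the contribution is $(-|\delta_w^-(u_w^-)| + |\delta_w^+(u_w^-)| - 1)/(deg_H(v)-1)$, which can be positive when $u_w^-$ has several $w$-farther neighbours in $H$. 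The key point is that such a potentially positive term, indexed by a node $u$ with a \emph{unique} $w$-closer neighbour $u_w^-$, should be charged against the $\Delta_w(u_w^-, u)$-type contributions coming from $u_w^-$'s own swaps, or against negative terms at deeper layers. I would organize the sum layer by layer in the BFS from $w$ restricted to $H$, and set up a charging/telescoping argument: each positive surplus at a node $u$ in layer $i$ (arising because its unique down-neighbour $u_w^-$ in layer $i-1$ has $k \geq 2$ up-neighbours including $u$) is absorbed by the negativity that $u_w^-$ contributes when playing the role of the ``$u$'' whose down-set has size $\ldots$, or more cleanly, by showing the total over each node $x \in V(H)$ of (surplus it generates as a $u_w^-$ for its up-neighbours) plus (its own negative term) is $\leq 0$. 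Concretely, a node $x$ in layer $i-1$ with $a = |\delta_w^-(x)|$ and $b = |\delta_w^+(x)|$ generates, for each of its $b$ up-neighbours $u$ that has $x$ as unique down-neighbour, a term; summing these and combining with $x$'s own contribution should yield something like $-a + $ (a non-positive quantity), using $a + b = deg_H(x) = \Gamma_{1,H}(x)$ and the $2$-edge-connectivity of $H$ (which guarantees $b \geq 1$ whenever $a = 1$).

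The strictness comes from $\mathrm{diam}(H) > 2$: I would pick $w \in V(H)$ attaining a large eccentricity within $H$ and exhibit a node at distance $\geq 3$ from $w$ in $H$ whose contribution — or that of a node on a shortest $w$-path through it — is strictly negative and not cancelled, e.g.\ a node $v$ with $|\delta_w^-(u)| > 1$ and $|\delta_w^-(v)| \geq 1$ appearing at layer $\geq 2$, which forces $\Delta_w < 0$ for that particular $w$. Because $\Delta_{w'} \leq 0$ for all other $w'$, we get $S < 0$, contradicting $S \geq 0$.

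The main obstacle I anticipate is the bookkeeping in the charging argument: making sure every positive term of the form $(|\delta_w^+(u_w^-)| - |\delta_w^-(u_w^-)| - 1)/(deg_H(v)-1)$ is matched, and that the $1/(deg_H(v)-1)$ normalization factors line up correctly across the layers rather than fighting the charging. A clean way to handle the normalization is to note that the term attached to the pair $(u, u_w^-)$ carries denominator $deg_H(u_w^-) - 1$, i.e.\ it is already an average over $u_w^-$'s up-neighbours, so grouping all pairs $(u, x)$ with common $x = u_w^-$ turns $\sum_u \Delta_w(u, x)$ into exactly $|\delta_w^+(x)| \cdot \frac{-a + b - 1}{\,\mathrm{something}\,}$ — and I would verify the resulting per-node inequality $|\delta_w^+(x)|(-a+b-1)/(deg_H(x)-1) \;-\; \big(\text{negative term from }x\big) \leq 0$ holds, with equality failing exactly when a layer of depth $\geq 3$ is nonempty. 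Getting this per-node inequality right, together with correctly handling nodes $x \notin V(H)$ (which contribute $0$ since swaps are internal to $H$) and the root $w$ itself, is where the real work lies.
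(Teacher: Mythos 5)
Your high-level strategy is the same as the paper's: use that $S\geq 0$ at equilibrium together with $S=\sum_{w\in V(G)}\Delta_w$, prove $\Delta_w\leq 0$ for every $w$, and exhibit one $w$ with $\Delta_w<0$ when $diam(H)>2$. The gap is in the mechanism you commit to for $\Delta_w\leq 0$. The per-node inequality you say you would verify --- that for each $x\in V(H)$ the terms $\Delta_w(u,x)$ over the up-neighbours $u$ of $x$, combined with the negative quantities attributed to $x$, sum to something $\leq 0$ --- is false. Take $H$ with vertices $w,x,u_1,u_2,u_3,z,q_1,q_2$ and edges $wx$, $xu_i$, $u_iz$ ($i=1,2,3$), $zq_1$, $q_1q_2$, $q_2w$; this is bipartite and $2$-edge-connected. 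Here $|\delta_w^-(x)|=1$, $|\delta_w^+(x)|=3$, $deg_H(x)=4$, and each $u_i$ has $x$ as its unique closer neighbour, so by Lemma~\ref{lem:basic} the pairs with lower node $x$ contribute $3\cdot\frac{-1+3-1}{4-1}=+1>0$, with nothing further attributed to $x$ (the $-|\delta_w^-(x)|$ is already inside each numerator). So charging the surplus of $u$ against its down-neighbour $u_w^-$ cannot close the argument; the compensation lives at the upper nodes $u_i$ themselves, one BFS layer deeper.

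What actually works (and is what the paper does) is two \emph{global} counting bounds plus a set inclusion, rather than a per-node cancellation. After absorbing the $-|\delta_w^-(u_w^-)|$ piece into the general double sum and reindexing by the lower endpoint, the negative part of $\Delta_w$ equals $-\sum_{v}\frac{|\delta_w^-(v)||\delta_w^+(v)|}{deg_H(v)-1}\leq -|Z_w|$, where $Z_w=\{v\in V(H): \delta_w^-(v)\neq\emptyset \text{ and } \delta_w^+(v)\neq\emptyset\}$, using $ab\geq a+b-1$ for positive integers; the positive part is at most $|\{u: |\delta_w^-(u)|=1\}|$ since each term $\frac{|\delta_w^+(u_w^-)|-1}{deg_H(u_w^-)-1}\leq 1$. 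The decisive step, which your grouping cannot see, is that $2$-edge-connectivity forces $\delta_w^+(u)\neq\emptyset$ whenever $|\delta_w^-(u)|=1$, hence $\{u:|\delta_w^-(u)|=1\}\subseteq Z_w$ and the two counts compare. (In the example above, each $u_i$ creates a surplus of at most $1$ at $x$ but also lies in $Z_w$ and contributes at most $-1$ to the negative part.) Your strictness step is in the right spirit but should be routed through these inequalities: choosing $w\in W_H(x_1)$ for an endpoint $x_1$ of a diametral path, the node $x_3$ witnesses that either $\{u:|\delta_w^-(u)|=1\}\subsetneq Z_w$ or $\Gamma_{1,H}(t_w)\subsetneq\{u:|\delta_w^-(u)|=1\}$ (where $t_w$ is the unique node of $H$ with $\delta_w^-(t_w)=\emptyset$), and either strict inclusion yields $\Delta_w<0$. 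As written, your plan is missing the idea that makes the bookkeeping close.
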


\begin{proof}

%It is not hard to see that if $H$ exists then $diam(G) \leq diam(H)+1$, if $diam(G) >3$ then $diam(H) > 2$. 

First, we claim that for every $w\in V(G)$, $\Delta_w \leq 0$. Applying Lemma \ref{lem:basic} we obtain

{\footnotesize{
\begin{align*}
\Delta_w &= \sum_{u \in V(H)} \sum_{v \in \Gamma_{1,H}(u)}  \Delta_w(u,v)  \\
&=   \left( \sum_{ \left\{ u \in V(H)   \land |\delta_w^-(u)|=1 \right\} }\frac{-|\delta_w^-(u_w^-)|+|\delta_w^+(u_w^-)|-1}{deg_H(u_w^-)-1} +\sum_{ \left\{u \in V(H) \land  |\delta_w^-(u)|>1 \right\} } \sum_{ v \in \delta_w^-(u)} \frac{-|\delta_w^-(v)|}{deg_H(v)-1} \right)  \\
&= \left( \sum_{ \left\{ u \in V(H)   \land |\delta_w^-(u)|=1 \right\} }\frac{ |\delta_w^+(u_w^-)|-1}{deg_H(u_w^-)-1} +\sum_{u \in V(H)} \sum_{ v \in \delta_w^-(u)} \frac{-|\delta_w^-(v)|}{deg_H(v)-1} \right). 
\end{align*}
}}
%\vskip 5pt

On the one hand:

$$ \sum_{u \in V(H)} \sum_{ v \in \delta_w^-(u)} \frac{|\delta_w^-(v)|}{deg_H(v)-1} = \sum_{v \in V(H)} \sum_{ u \in \delta_w^+(v)}   \frac{| \delta_w^-(v)| }{deg_H(v)-1}=\sum_{v \in V(H)}  \frac{ |\delta_w^-(v)| |\delta_w^+(v)|}{deg_H(v)-1}.$$

Now, let $Z_w$ be the subset of nodes $z$ from $V(H)$ such that $\delta_w^-(z) \neq \emptyset$ and $\delta_w^+(z) \neq \emptyset$. If $z \in Z_w$ then clearly $|\delta_w^-(z)| | \delta_w^+(z)|  \geq deg_H(z)-1$. One possible way to see this is the following. Since $H$ is bipartite, then $|\delta_w^-(z)|$ and $| \delta_w^+(z)|$ are positive integers that add up to $deg_H(z)$. Furthermore, any concave function defined on a closed interval attains its minimum in one of its extremes. Therefore, the conclusion follows when combining these two facts to the function $f(x) = x(deg_H(z)-x)$ defined in $[1,deg_H(z)-1]$. In this way: 

\begin{equation}\label{eq:ineq1}
\sum_{u \in V(H)} \sum_{ v \in \delta_w^-(u)} \frac{|\delta_w^-(v)|}{deg_H(v)-1} = \sum_{v \in Z_w}  \frac{ |\delta_w^-(v)| |\delta_w^+(v)|}{deg_H(v)-1}\geq \sum_{v \in Z_w} 1 =  |Z_w|. 
\end{equation}

On the other hand, for any $u$ such that $| \delta_w^-(u)|= 1$ it holds

\begin{equation}
\label{eq:ineq2}
\frac{ |\delta_w^+(u_w^-)|-1}{deg_H(u_w^-)-1}  \leq 1.
\end{equation}

Notice that the equality in (\ref{eq:ineq2}) holds exactly when $\delta_w^-(u_w^-) = \emptyset$. For any $w \in V(G)$, there exists exactly one node $t_w\in V(H)$ verifying $\delta_w^-(t_w)= \emptyset$, which is the unique node from $V(H)$ such that $w \in W_H(t_w)$. Therefore, equality in (\ref{eq:ineq2}) holds exactly for the nodes from $\Gamma_{1,H}(t_w)$. 

In this way:

\begin{equation}\label{eq:ineq3}
\sum_{ \left\{ u \in V(H)   \land |\delta_w^-(u)|=1 \right\} }\frac{ |\delta_w^+(u_w^-)|-1}{deg_H(u_w^-)-1} \leq |\left\{ u \in V(H)   \land |\delta_w^-(u)|=1 \right\}|.
\end{equation}

Notice that since $H$ is bipartite, $\Gamma_{1,H}(t_w) \subseteq \left\{ u \in V(H) \mid |\delta_w^-(u)|= 1\right\}$. Therefore, equality in (\ref{eq:ineq3}) holds only when $\Gamma_{1,H}(t_w) = \left\{ u \in V(H) \mid |\delta_w^-(u)|= 1\right\} $, otherwise, the inequality in (\ref{eq:ineq3}) is strict.

Now, recall that $\left\{ u \in V(H)   \land |\delta_w^-(u)|=1 \right\} \subseteq Z_w $ because $H$ is $2-$edge-connected. Therefore, combining $(1)$ with $(3)$: 

$$\Delta_w \leq -|Z_w| +| \left\{ u \in V(H)   \mid |\delta_w^-(u)|=1 \right\}| \leq 0 $$

as we wished to prove.

Now, suppose that  $diam(H)>2$ and take any path $\pi = x_1-x_2-x_3-\cdots$ of length $diam(H)$ inside $H$.  Then,  pick $x \in W_H(x_1)$ any node  inside $W_H(x_1)$.  Setting $w=x$ we have that $x_1= t_w$ and $x_3 \in Z_w$ but $x_3 \not \in \Gamma_{1,H}(t_w)$. If $x_3 \not \in \left\{ u \in V(H)   \mid |\delta_w^-(u)|=1 \right\}$ then the inclusion $ \left\{ u \in V(H) \mid |\delta_w^-(u)|=1 \right\} \subseteq Z_w$ is strict and then $\Delta_w < 0$. Otherwise, $x_3 \in  \left\{ u \in V(H) \mid |\delta_w^-(u)|=1 \right\}$ but $x_3 \not \in \Gamma_{1,H}(t_w)$ so that the inclusion $\Gamma_{1,H}(t_w) \subseteq  \left\{ u \in V(H) \mid |\delta_w^-(u)|=1 \right\}$ is strict and then $\Delta_w <0$, too. Therefore, $S = \sum_{w \in V(G)}\Delta_w < 0$ and this contradicts the fact that $G$ is an equilibrium graph. 
\end{proof}
%\begin{center}
%\includegraphics[scale=0.65]{sumbasic-path.pdf}
%\end{center}

%\subsection{$2-$edge-connectivity in the sum basic equilibria}\label{sec:structureH}

Next, we investigate further topological properties of any $2-$edge-connected component $H$ from any sum basic equilibrium $G$. These properties help us to derive the first main result of this paper.  

\begin{lemma}
\label{lem:bridge}
If $uv \in E(G)$ is a bridge, then $deg(u)=1$ or $deg(v)=1$.
\end{lemma}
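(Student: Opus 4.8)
The plan is to argue by contradiction. Suppose $uv$ is a bridge with $\deg(u)\ge 2$ and $\deg(v)\ge 2$, and let $A,B$ be the two connected components of $G-uv$, containing $u$ and $v$ respectively. Since $uv$ is the only edge of $G$ across this cut and $\deg(u),\deg(v)\ge 2$, both $A$ and $B$ have at least two vertices, $u$ has a neighbour $u'\in V(A)$, and $v$ has a neighbour in $V(B)$; by symmetry assume $|V(A)|\le|V(B)|$. I will show that the deviation at $u'$ obtained by swapping the edge $u'u$ for the edge $u'v$ strictly decreases the sum of distances from $u'$, contradicting that $G$ is a sum basic equilibrium. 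Call the deviated graph $G'$; note $G'$ is connected, since $G-u'u$ has at most two components and the new edge $u'v$ reconnects them through $B$.

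The estimate uses that, $uv$ being a bridge, for $a\in V(A)$ and $b\in V(B)$ one has $d_G(a,b)=d_G(a,u)+1+d_G(v,b)$, and shortest paths between two vertices on the same side of the bridge stay on that side; the same holds in $G'$, where $v$ is now adjacent to both $u$ and $u'$. Using $d_G(u',u)=1$ this gives $D_G(u')=D_A(u')+\sum_{b\in V(B)}\bigl(2+d_G(v,b)\bigr)$, whereas in $G'$ every shortest $u'$--$b$ path still passes through $v$ and $d_{G'}(u',v)=1$, so $d_{G'}(u',b)=1+d_G(v,b)$ for each $b\in V(B)$. Hence the contribution of $V(B)$ to the distance sum of $u'$ drops by exactly $|V(B)|$.

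It then remains to bound how much deleting $u'u$ can increase the contribution of $V(A)$. Partition $V(A)$ into $A_1=\{\,a\in V(A):d_G(u',a)=d_G(u,a)+1\,\}$ and its complement. If $a\notin A_1$ then $d_G(u',a)\le d_G(u,a)$, so some shortest $u'$--$a$ path inside $A$ does not start with the edge $u'u$; it survives in $G'$, giving $d_{G'}(u',a)\le d_G(u',a)$. If $a\in A_1$ then no shortest $u$--$a$ path inside $A$ uses $u'u$ (such a path would have length at least $1+d_G(u',a)>d_G(u,a)$), so one survives in $G'$, and following $u'v$, then $vu$, then a shortest $u$--$a$ path yields $d_{G'}(u',a)\le 2+d_G(u,a)=1+d_G(u',a)$, an increase of at most one. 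Therefore the contribution of $V(A)$ grows by at most $|A_1|$, and since $u'\notin A_1$ we have $|A_1|\le|V(A)|-1$.

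Putting the two parts together, the cost difference of this deviation is at most $|A_1|-|V(B)|\le\bigl(|V(A)|-1\bigr)-|V(B)|\le-1<0$, using $|V(A)|\le|V(B)|$, which is the desired contradiction. The one delicate point is exactly the bound on the $V(A)$-side: deleting a single edge can in principle lengthen several shortest paths from $u'$ by a lot, and the argument works only because this affects just the vertices whose shortest path from $u'$ genuinely used $u'u$ (the set $A_1$), for which the increase is at most one since that edge can be rerouted through the bridge $uv$. Choosing $A$ to be the smaller side is what guarantees $|V(A)|-1<|V(B)|$, so that this bounded loss is more than compensated by the unit gain to every vertex of $B$.
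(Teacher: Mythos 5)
Your proof is correct and takes essentially the same approach as the paper: the authors also pick a neighbour $u'$ of $u$ on the smaller side of the bridge and swap $u'u$ for $u'v$, bounding the cost change by $(|V(A)|-1)-|V(B)|\le -1$. Your write-up merely makes explicit (via the set $A_1$) the paper's one-line claim that the deviator gets at most one unit farther from each other vertex on its own side.
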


\begin{proof}
Let $u_1u_2 \in E(G)$ be a bridge between two connected components $G_1,G_2$ in such a way that $u_1 \in V(G_1)$ and $u_2 \in V(G_2)$. Furthermore, assume wlog that $|V(G_1)| \leq |V(G_2)|$. If we suppose the contrary, then we can find a node $v\in V(G_1)$ such that $vu_1 \in E(G_1)$. Then, let $\Delta C$ be the cost difference associated to the deviation in $v$ that consists in swapping the edge $vu_1$ for the edge $vu_2$. Clearly, we are getting one unit closer to every node from $V(G_2)$ and getting one unit distance further from at most all nodes in $V(G_1)$ except for the node $v$ itself. Therefore, using the assumption $|V(G_1)| \leq |V(G_2)|$, we deduce: 
\begin{align*}
\Delta C &\leq |V(G_1)|-1-|V(G_2)| \leq -1 < 0.
\qedhere \end{align*}
\end{proof}

\begin{lemma}
\label{lem:pels}
If $H$ is any $2-$edge-connected component of $G$ then there exists at most one node $u \in V(H)$ such that $W_H(u) \neq \left\{ u \right\}$. 
\end{lemma}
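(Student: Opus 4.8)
The plan is to first pin down the global structure of $G$ that is forced by Lemma~\ref{lem:bridge}, and then, assuming two vertices of $H$ carry a nontrivial $W_H$, to exhibit a short improving swap performed at a pendant leaf.

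If $|V(H)|=1$ the statement is vacuous, so assume $|V(H)|\geq 2$; then $H$ contains a cycle and $G$ is not a tree. Let $T$ be the set of non-leaves of $G$. I would first show that $G[T]$ is a $2$-edge-connected subgraph: it is connected, since a path in $G$ between two non-leaves has only non-leaves as interior vertices; and it is bridgeless, since any edge of $G[T]$ has both endpoints of degree $\geq 2$, hence by Lemma~\ref{lem:bridge} it is not a bridge of $G$, and therefore not a bridge of $G[T]$ either (a detour around it in $G$ avoids leaves, which can never be interior to a path). Since every vertex of $H$ has degree $\geq 2$ we have $H\subseteq G[T]$, and as $H$ is a maximal connected bridgeless subgraph this forces $H=G[T]$. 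Consequently $V(G)\setminus V(H)$ consists entirely of leaves of $G$, each attached to a single vertex of $V(H)$, and for every $u\in V(H)$ the set $W_H(u)$ is exactly $\{u\}$ together with the pendant leaves at $u$; in particular $W_H(u)\neq\{u\}$ if and only if $u$ has a pendant leaf.

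Now suppose for contradiction that distinct $u_1,u_2\in V(H)$ both have pendant leaves, say $\ell_1$ at $u_1$ and $\ell_2$ at $u_2$. The key step is to consider the deviation at $\ell_1$ that swaps $\ell_1u_1$ for $\ell_1u_2$. Since $\ell_1$ is a leaf, the deviated graph $G'$ is connected and satisfies $G'-\ell_1=G-\ell_1$, and every distance from $\ell_1$ is obtained by prepending one edge to a distance from $u_1$ (in $G$) respectively from $u_2$ (in $G'$); hence the cost difference of this deviation equals $D(u_2)-d_G(u_2,\ell_1)-D(u_1)+d_G(u_1,\ell_1)$, which using $d_G(u_1,\ell_1)=1$ and $d_G(u_2,\ell_1)=d_G(u_1,u_2)+1$ simplifies to $D(u_2)-D(u_1)-d_G(u_1,u_2)$. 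Equilibrium forces this to be $\geq 0$, and the symmetric deviation at $\ell_2$ forces $D(u_1)-D(u_2)-d_G(u_1,u_2)\geq 0$; adding the two inequalities yields $2\,d_G(u_1,u_2)\leq 0$, contradicting $u_1\neq u_2$.

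The only delicate point is the structural reduction $H=G[T]$, namely that the non-tree part of a sum basic equilibrium is a single bridgeless block decorated with pendant leaves; this must be argued carefully from Lemma~\ref{lem:bridge} together with the maximality in the definition of a $2$-edge-connected component. Once that is established, the swap at a pendant leaf and the symmetry argument are routine. Note that this argument uses neither bipartiteness nor $diam(H)\leq 2$, so the conclusion in fact holds for arbitrary sum basic equilibria.
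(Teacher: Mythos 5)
Your proof is correct and follows essentially the same route as the paper: Lemma~\ref{lem:bridge} forces the pendant parts $W_H(u_1)$, $W_H(u_2)$ to consist of leaves, and the contradiction comes from swapping a pendant leaf from one attachment point to the other and comparing $D(u_1)$ with $D(u_2)$. The only differences are cosmetic --- you derive the slightly stronger structural fact $H=G[T]$ where the paper only notes that the $W_H(u_i)$ are stars, and you sum the two symmetric deviations (gaining the $-d_G(u_1,u_2)$ terms) where the paper uses a wlog on which of $D(u_1),D(u_2)$ is smaller and a single deviation.
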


\begin{proof}
Suppose the contrary and we reach a contradiction. Let $u_1,u_2$ be two distinct nodes such that $W_H(u_1) \neq  \left\{ u_1 \right\}$ and $W_H(u_2) \neq \left\{ u_2 \right\}$.  Let $v_1 \neq u_1$ and $v_2 \neq u_2$ be two nodes from $W_H(u_1)$ and $W_H(u_2)$ respectively.   By Lemma \ref{lem:bridge}, $W_H(u_1)$ and $W_H(u_2)$ are stars. Assume wlog that $D(u_1) \leq D(u_2)$. When swapping the link $v_2u_2$ for the link $v_2u_1$ we can reach the nodes from $V(G) \setminus \left\{ v_1\right\}$ at the distances seen by $v_1$ and, also, we are reducing in at least one unit distance the distance from $v_2$ to $v_1$. Therefore, if $\Delta C$ is the cost difference associated to such swap, then: $\Delta C \leq D(u_1)-D(u_2) - 1 < 0$.
\end{proof}

Therefore, combining  these two lemmas with Theorem \ref{thm:diamH}, we deduce that every non-tree bipartite sum basic equilibrium is the complete bipartite $K_{r,s}$ with some star $S_k$ (the star with a central node and $k$ edges) attached to exactly one of the nodes from $K_{r,s}$, let it be $x_0 \in V(K_{r,s})$. 
Then, if we consider any path $x_0-x_1-x_2$ in $H$ of length $2$, $x_2$ has an incentive to swap the link $x_2x_1$ for the link $x_2x_0$ unless $k=0$, that is, unless $G = K_{r,s}$. 

%\begin{center}
%\includegraphics[scale=0.7]{bipartite_star.pdf}
%\end{center}

Now we are ready to state the main result of this section:
\begin{corollary}
The set of bipartite sum basic equilibria is the set of complete bipartite graphs $K_{r,s}$, with $r,s \geq 1$ and therefore the diameter of every bipartite sum basic equilibrium graph is at most $2$. 
\end{corollary}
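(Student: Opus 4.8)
The plan is to assemble the three ingredients already proved in the section and check that the only graphs that survive all of them are the complete bipartite graphs $K_{r,s}$. We start from an arbitrary bipartite sum basic equilibrium $G$. If $G$ is a tree, Theorem~1 tells us it is a star $K_{1,s}$, which is complete bipartite, so we may assume $G$ is not a tree and hence contains a $2$-edge-connected component $H$. By Theorem~\ref{thm:diamH} we have $\mathrm{diam}(H) \le 2$; combined with the fact that $H$ is $2$-edge-connected and bipartite, this forces $H$ itself to be a complete bipartite graph $K_{r,s}$ with $r,s \ge 2$. (This small structural step deserves a sentence: in a bipartite graph of diameter at most $2$ with parts $A,B$, every vertex of $A$ must be adjacent to every vertex of $B$, since two vertices in opposite parts are at odd distance, hence at distance exactly $1$; $2$-edge-connectivity just rules out $r=1$ or $s=1$.)

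Next I would invoke Lemma~\ref{lem:bridge} and Lemma~\ref{lem:pels} to control everything hanging off $H$. Lemma~\ref{lem:pels} says at most one vertex $x_0 \in V(H)$ has $W_H(x_0) \neq \{x_0\}$, and Lemma~\ref{lem:bridge} (applied repeatedly to the bridges inside $W_H(x_0)$) forces $W_H(x_0)$ to be a star centred at $x_0$, i.e.\ $x_0$ together with $k \ge 0$ pendant leaves. So $G$ is exactly $K_{r,s}$ with a star $S_k$ glued at a single vertex $x_0$, and since $H$ was an arbitrary $2$-edge-connected component, this also shows $G$ has only one such component.

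Finally I would rule out $k \ge 1$ by exhibiting a profitable deviation, exactly as sketched at the end of the excerpt: take a leaf $x_2 \ne x_0$ in the same part of $K_{r,s}$ as $x_0$ is not—rather, pick a path $x_0 - x_1 - x_2$ inside $H$ (possible because $r,s \ge 2$), so $x_2$ lies in the same part as $x_0$ and $d_G(x_2,x_0)=2$. Swapping the edge $x_2 x_1$ for $x_2 x_0$ brings $x_2$ one unit closer to $x_0$ and to every vertex in $W_H(x_0)\setminus\{x_0\}$ (there are $k \ge 1$ of them), while not increasing the distance from $x_2$ to any other vertex, since $x_0$ is adjacent to $x_1$ and to every vertex of the opposite part. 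This makes the cost difference $\le -(1+k) < 0$, contradicting equilibrium. Hence $k=0$ and $G = K_{r,s}$; conversely every $K_{r,s}$ with $r,s \ge 1$ is easily checked to be a sum basic equilibrium, and all such graphs have diameter at most $2$. I do not expect a serious obstacle here: the only place needing slight care is the ``$H$ is $K_{r,s}$'' deduction and making sure the deviating vertex $x_2$ can always be chosen (which needs $r,s \ge 2$, already guaranteed by $2$-edge-connectivity).
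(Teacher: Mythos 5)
Your proposal follows essentially the same route as the paper: Theorem~\ref{thm:diamH} plus bipartiteness forces each $2$-edge-connected component to be a complete bipartite graph, Lemmas~\ref{lem:bridge} and~\ref{lem:pels} reduce $G$ to $K_{r,s}$ with a star $S_k$ attached at a single vertex $x_0$, and the swap of $x_2x_1$ for $x_2x_0$ along a path $x_0-x_1-x_2$ in $H$ kills the case $k\geq 1$. One small correction: after that swap the distance from $x_2$ to $x_1$ \emph{does} increase from $1$ to $2$, so the cost difference is $-k$ rather than $-(1+k)$; this is still strictly negative for $k\geq 1$, so your conclusion is unaffected.
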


%%%%%%%%%%%%%%%%%%%%%%%%%%%%%%%%%%%%%%%%%%%%%%%%%%%%%%%%%%%%%%%%%%%%%%%%%%%%%%%%%%
\section{Non-bipartite graphs}
\label{sec:other}
%%%%%%%%%%%%%%%%%%%%%%%%%%%%%%%%%%%%%%%%%%%%%%%%%%%%%%%%%%%%%%%%%%%%%%%%%%%%%%%%%%

In this section we consider some well-known families of non-bipartite graphs that have a tree-like topology, and we will see that if we require them to be sum basic equilibrium networks, then their diameter is at most 2 as it happens with the tree topology. 

First, we consider \emph{block graphs} (also known as \emph{clique trees}). A connected graph is a \emph{block graph} or \emph{clique tree} if its blocks ($2$-connected components) are cliques. Block graphs form a class that plays an important role in computer science, especially in the study of hooking networks \cite{DH2019} and sparse matrix algorithms \cite{BP1993}. %A \emph{windmill graph}, denoted by $Wind(k,t)$, is a block graph formed by joining $k$ cliques of size $t$ at a shared universal vertex. The \emph{friendship graph} (or \emph{Dutch windmill graph}), denoted $F_{2k+1}$, is the graph $Wind(k,3)$ obtained by joining $k$ copies of $K_3$ by a common vertex so that $n=|V|=2k+1$. 

In the next result we show that Theorem \ref{thm:basic}, which holds for trees, can be extended to the more general setting of block graphs. We should note that the new result is not a subcase of the previous bipartite case. Before we need the following preliminary result.

\begin{lemma}
\label{lemm:bridges-triangles}
Let $H$ be a biconnected component from $G$ with $u_1,u_2$ two cut vertices from $G$ with  $u_1u_2\in E(G)$. Furthermore, let $W_1 = W_H(u_1),W_2 =W_H(u_2)$. If $G$ is a sum basic equilibrium graph then $|W_1| = 1$ or $|W_2| = 1$. 
\end{lemma}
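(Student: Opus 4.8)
The plan is to argue by contradiction, in the same spirit as the proof of Lemma~\ref{lem:pels}: assuming $|W_1|\ge 2$ and $|W_2|\ge 2$ I will exhibit a single-edge swap with strictly negative cost difference, contradicting that $G$ is a sum basic equilibrium. By the symmetry between $u_1$ and $u_2$ I may assume $D(u_1)\le D(u_2)$. Since $W_2$ is connected and $|W_2|\ge 2$, the cut vertex $u_2$ has a neighbour $v\in W_2\setminus\{u_2\}$, and $v\neq u_1,u_2$ because $v\notin V(H)$. I consider the deviation at $v$ that swaps the edge $vu_2$ for the edge $vu_1$, and I let $G'$ denote the deviated graph. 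Note first that $G'$ is connected: the edge $u_1u_2$ is left intact, so if $vu_2$ happens to be a bridge then $u_1$ sits on the same side of it as $u_2$, and adding $vu_1$ joins the two sides.

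Next I split the vertex set as $V(G)=W_2\cup B$ (disjoint union) with $B:=V(G)\setminus W_2$, where $u_1\in B$ since $u_1\in V(H)\setminus\{u_2\}$. Using that $u_2$ separates $W_2\setminus\{u_2\}$ from $B$ in $G$ -- a direct consequence of $H$ being a biconnected component -- together with $v\sim u_2$ and $u_1\sim u_2$, one gets $d_G(v,z)=1+d_G(u_2,z)$ for every $z\in B$ and $d_G(u_1,z)=1+d_G(u_2,z)$ for every $z\in W_2\setminus\{u_2\}$. The second identity, with $d_G(u_1,u_2)=1$, gives $\sum_{z\in W_2}\bigl(d_G(u_1,z)-d_G(u_2,z)\bigr)=|W_2|$, and hence $\sum_{z\in B}\bigl(d_G(u_1,z)-d_G(u_2,z)\bigr)=D(u_1)-D(u_2)-|W_2|$.

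I then estimate the cost difference of the deviation separately on $B$ and on $W_2\setminus\{v\}$. For $z\in B$: a shortest $u_1$--$z$ path in $G$ must avoid $W_2\setminus\{u_2\}$, in particular the vertex $v$, so it does not use the edge $vu_2$ and survives in $G'$; therefore $d_{G'}(v,z)\le 1+d_{G'}(u_1,z)\le 1+d_G(u_1,z)$, and combining with $d_G(v,z)=1+d_G(u_2,z)$ the total change over $B$ is at most $\sum_{z\in B}\bigl(d_G(u_1,z)-d_G(u_2,z)\bigr)=D(u_1)-D(u_2)-|W_2|\le -|W_2|$. The harder inequality is $d_{G'}(v,z)\le d_G(v,z)+1$ for $z\in W_2\setminus\{v\}$, which bounds the inflation of distances inside $W_2$ caused by deleting $vu_2$. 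For $z=u_2$ it holds since $d_{G'}(v,u_2)\le d_{G'}(v,u_1)+d_{G'}(u_1,u_2)=2$. For $z\in W_2\setminus\{u_2,v\}$, take a shortest $v$--$z$ path $P$ in $G$; it lies inside $W_2$. If $P$ avoids the edge $vu_2$ it persists in $G'$, so $d_{G'}(v,z)\le d_G(v,z)$. Otherwise $P$ begins $v,u_2,\dots$ and continues along a shortest $u_2$--$z$ path $Q$; such a $Q$ cannot itself use $vu_2$ (it would not be shortest), so $v,u_1,u_2$ followed by $Q$ is a walk in $G'$ of length $2+|Q|=1+d_G(v,z)$. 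Hence the total change over $W_2\setminus\{v\}$ is at most $|W_2|-1$.

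Adding the two parts, the cost difference of the deviation at $v$ is at most $\bigl(D(u_1)-D(u_2)-|W_2|\bigr)+\bigl(|W_2|-1\bigr)=D(u_1)-D(u_2)-1\le -1<0$, the required contradiction; the case $D(u_2)<D(u_1)$ is handled by exchanging the roles of $u_1,u_2$ and of $W_1,W_2$, which is where the hypothesis $|W_1|\ge 2$ enters. I expect the only genuinely delicate point to be the rerouting bound $d_{G'}(v,z)\le d_G(v,z)+1$ inside $W_2$: removing $vu_2$ could a priori increase some distances in $W_2$ substantially, and the dichotomy above (does a shortest $v$--$z$ path use $vu_2$ or not?), combined with the fact that shortest $u_2$--$z$ paths never use $vu_2$, is what keeps the loss to a single unit. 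Everything else is essentially the same bookkeeping as in the proof of Lemma~\ref{lem:pels}.
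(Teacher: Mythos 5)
Your proof is correct, and all the delicate points (that $u_2$ separates $W_2\setminus\{u_2\}$ from the rest, the identity $\sum_{z\in W_2}(d_G(u_1,z)-d_G(u_2,z))=|W_2|$, and the rerouting bound $d_{G'}(v,z)\le d_G(v,z)+1$ inside $W_2$ via the edge $u_1u_2$) are justified. The underlying deviation is the same one the paper uses -- a neighbour of one cut vertex reattaching to the other -- but you break the symmetry differently. The paper never compares $D(u_1)$ with $D(u_2)$: it bounds the cost differences $\Delta C_1,\Delta C_2$ of the two mirror-image swaps (one from each side) by expressions in which the terms $D_{G\setminus(W_1\cup W_2)}(v_1)-D_{G\setminus(W_1\cup W_2)}(v_2)$ cancel upon addition, yielding $\Delta C_1+\Delta C_2\le -2$ and hence that at least one swap is improving. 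You instead assume WLOG $D(u_1)\le D(u_2)$ and show the single swap at $v\in W_2$ has cost at most $D(u_1)-D(u_2)-1$. The paper's averaging trick is slightly lighter on bookkeeping (it needs both $|W_1|\ge 2$ and $|W_2|\ge 2$ simultaneously to set up the two swaps, and only crude per-vertex bounds), while your version is a bit more informative: it isolates the one-sided statement that whenever $D(u_1)\le D(u_2)$ and $|W_2|\ge 2$ the node $v$ already has an improving deviation, regardless of $|W_1|$. Either route proves the lemma.
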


\begin{proof}
Suppose the contrary, then there exist nodes $v_1,v_2$ with $v_1u_1 \in E(W_1)$ and $v_2u_2 \in E(W_2)$. First of all, let us suppose that $v_1$ swaps the edge $v_1u_1$ for the edge $v_1u_2$ and let $\Delta C_1$ be such cost difference. Let $z$ be any node from $G$ and let $\Delta(z)$ be the difference of the distance from $v_1$ to $z$ in the deviated graph minus the distance from $v_1$ to $z$ in the original graph $G$. We consider the following cases:

\vskip 5pt

1. If $z \in W_1$.

\hspace{0.75cm} 1.a If there exists a shortest path from $G$ connecting $v_1$ with $z$ not using the edge $v_1u_1$, then $\Delta(z) = 0$. (Notice that $z = v_1$ belongs to this case).

\hspace{0.75cm} 1.b Otherwise, $\Delta(z) \leq 1$ because $u_1u_2 \in E(H)$ by hypothesis.

2. If $z \in W_2$ then $\Delta(z) = -1$.

3. If $z \in G \setminus \left(W_1 \cup W_2 \right)$ then $\Delta(z) \leq d_G(v_2,z)-d_G(v_1,z)$. 

\vskip 5pt
Therefore: 
$$\Delta C_1 \leq D_{G \setminus (W_1 \cup W_2)}(v_2)-D_{G \setminus (W_1 \cup W_2)}(v_1) +(|W_1|-1)-|W_2|.$$
Now, if we consider $\Delta C_2$ the cost difference associated to the deviation in $v_2$ that consists in swapping the edge $v_2u_2$ for the edge $v_2u_1$ we obtain: 
$$\Delta C_2\leq D_{G \setminus (W_1 \cup W_2)}(v_1)-D_{G \setminus (W_1 \cup W_2)}(v_2) +(|W_2|-1)-|W_1|$$
And from here: 
$$\Delta C_1 + \Delta C_2 \leq -2 < 0.$$
So $G$ cannot be a sum basic equilibrium graph. 
\end{proof}

%És interessant el resultat que proposes explorar, generalitza el resultat per als arbres i no és un sub-cas de graf bipartit. Crec que es pot demostrar de manera senzilla veient quelcom  més general que vindria a ser una adaptació del que hi ha escrit a la pàgina 5 de l'article (lemma 3), però canviant pont per a cut-vertex:

%Given a graph $G$, if $A,B \subseteq V(G)$ and $u \in V(G)$, we say that the tuple $(A,B)$ is a $2-$block-decomposition of $G$ on $u$ iff $A,B$ are connected subgraphs of $G$, $u$ is a cut vertex and it holds $A \cup B = V(G)$ and $A \cap B = \left\{u\right\}$. 

%\begin{lemma}
%\label{lemm:block}
%If $A,B \subseteq V(G)$ with $|A| \leq |B|$ define a $2-$block-decomposition of $G$ on $u\in V(G)$, then the local diameter of $u$ in $A$ is at most $1$. 
%\end{lemma}

%\begin{proof}
%If the local diameter of $u \in A$ is equal or larger than $2$, then there exists an edge $vw$ inside $A$, with $v$ at distance $2$ of $u$ and $w$ at distance $1$ of $u$, such that if $v$ does the swap of $vw$ to $vu$, each distance from $v$ to any vertex in $B$ (including $u$) decreases exactly one unit whilst the distances from $v$ to every node from $A$ excluding $u$ either increase in one unit or are kept the same. Hence, the cost difference associated to this swap is at most $(|A|-1)-|B| < 0$. This implies that the local diameter of $u\in A$ is $< 2$.
%\end{proof}

\begin{corollary}
\label{corol:block}
If a sum equilibrium graph $G$ in the basic network-creation game is a block graph, then it has diameter at most $2$.
\end{corollary}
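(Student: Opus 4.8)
The plan is to combine the structural facts already available: Lemma~\ref{lemm:bridges-triangles} (which forces ``pendant'' parts hanging off adjacent cut vertices to be trivial), Lemma~\ref{lem:bridge} (bridges have a leaf endpoint), and the general bound on $2$-edge-connected components. First I would argue that a sum basic equilibrium block graph $G$ of diameter $>2$ must contain a path $x_0-x_1-x_2-x_3$ realizing distance $3$ between two vertices. Since in a block graph every edge lies in a unique maximal clique and distinct maximal cliques share at most one vertex, the edges $x_0x_1$, $x_1x_2$, $x_2x_3$ lie in (at least) two distinct blocks, so at least one of the internal vertices, say $x_1$, is a cut vertex separating $x_0$ from $x_2,x_3$, and likewise $x_2$ is a cut vertex separating $x_3$ from $x_0,x_1$.

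Next I would zoom in on the block $B$ containing $x_1$ and $x_2$ (a clique, since $G$ is a block graph) and on the blocks on either side. The idea is to pick the deviation in which $x_3$ swaps its edge $x_3x_2$ for the edge $x_3x_0$ (which exists as a potential new edge regardless of adjacency, since deviations allow connecting to any vertex). Through this swap $x_3$ moves one step closer to $x_0$ and to everything on $x_0$'s side of the cut vertex $x_1$, at the cost of moving at most one step further from the vertices on $x_3$'s own side. The accounting that makes this a strict improvement is exactly the kind done in Lemma~\ref{lem:pels} and Lemma~\ref{lemm:bridges-triangles}: using the WLOG choice that the side of $x_0$ is at least as heavy (in the $D(\cdot)$ sense) as the side of $x_3$, plus the ``$-1$'' coming from the strict distance decrease to $x_0$ itself, we get a negative cost difference — unless the side of $x_3$ is actually a single vertex. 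So I would split into the two symmetric cases according to which side is heavier, and in each case derive a contradiction with the equilibrium property, possibly after first invoking Lemma~\ref{lemm:bridges-triangles} and Lemma~\ref{lem:bridge} to strip away trivial pendant stars and reduce to a clean configuration (e.g. the block $B$ together with the blocks immediately adjacent to $x_1$ and to $x_2$).

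The main obstacle I anticipate is handling the bookkeeping when the relevant cut vertices are not adjacent, i.e. when the block $B$ separating $x_1$-side from $x_2$-side is itself a clique of size $\geq 3$ rather than a single bridge edge: then the swap by $x_3$ interacts with all vertices of $B$, and one must be careful that moving from $x_2$ to $x_0$ does not accidentally increase distances to the other clique vertices of $B$ by more than one, or to the far components hanging off them. I expect to resolve this by choosing the endpoint of the swap more cleverly (connecting to whichever vertex of $B$, or of $x_0$'s component, gives the cleanest distance profile) and by averaging over a family of swaps in the spirit of the quantities $S$ and $\Delta_w$ from Section~\ref{sec:localswap}, so that the heavy-side/light-side comparison again yields a strictly negative total. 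If that averaging turns out to be unnecessary, the shortest route is simply: diameter $>2$ forces a path of length $3$, that path forces two nested cut-vertex separations along a single block, Lemma~\ref{lemm:bridges-triangles} (or a direct analogue for clique blocks) forces one side to be a single vertex, and then the explicit swap of that vertex's unique edge toward the heavy side contradicts equilibrium; hence $\mathrm{diam}(G)\le 2$.
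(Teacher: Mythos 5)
Your ``shortest route'' at the end is essentially the paper's proof, and the hedges you attach to it are unnecessary. The point you were unsure about --- whether Lemma~\ref{lemm:bridges-triangles} applies directly or needs ``a direct analogue for clique blocks'' --- resolves itself: on a geodesic $x_0-x_1-x_2-x_3$ the middle edge $x_1x_2$ lies in a block $K$ which, since $G$ is a block graph, is a clique, so the two cut vertices $x_1$ and $x_2$ are adjacent and Lemma~\ref{lemm:bridges-triangles} applies verbatim with $H=K$, $u_1=x_1$, $u_2=x_2$. Moreover the contradiction is then immediate, with no further swap needed: since $d_G(x_0,x_2)=2$ and $d_G(x_1,x_3)=2$, neither $x_0$ nor $x_3$ can belong to the clique $K$, so $x_0\in W_K(x_1)$ and $x_3\in W_K(x_2)$ and both components have size strictly greater than $1$, which directly contradicts the conclusion of Lemma~\ref{lemm:bridges-triangles} that one of them is a singleton. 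Your final step (``the explicit swap of that vertex's unique edge toward the heavy side'') is therefore redundant, and the elaborate averaging and case analysis in your first two paragraphs --- including the worry about blocks of size at least $3$ and about non-adjacent cut vertices --- is unnecessary: Lemma~\ref{lemm:bridges-triangles} is stated for an arbitrary biconnected component and already internalizes all of that bookkeeping (its case 1.b uses only the edge $u_1u_2$). So the proposal is correct once trimmed to its last sentence, and that trimmed version coincides with the paper's argument.
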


\begin{proof}
Suppose the contrary, that the diameter is at least $3$. Then there exist at least three non-trivial and edge-disjoint cliques $K,K_1,K_2$ and two distinct vertices $u_1,u_2$ such that $u_1 \in K \cap K_1$, $u_2 \in K \cap K_2$ and $u_1u_2 \in E(K)$. Let $H$ be the biconnected component from $G$ that $K$ defines. Then $K_1\subseteq W_H(u_1)$ and $K_2 \subseteq W_H(u_2)$ so that $|W_H(u_1)|, |W_H(u_2)| > 1$, contradicting Lemma \ref{lemm:bridges-triangles}. 
\end{proof}

%Aleshores utilitzant aquesta propietat a un block graph genèric i equilibri sum basic s'arriba a la conclusió que la distància màxima entre dos nodes de dos blocs diferents és com a molt 2 i per tant, el graf original necessàriament ha de ser un windmill graph com tu assenyales, un resultat que queda prou rodó. 

A \emph{cactus graph} (sometimes called a \emph{cactus tree}) is a connected graph in which any two simple cycles have at most one vertex in common.

\begin{lemma}
\label{lemm:cycle}
Let $c$ be a cycle in a cactus sum basic equilibrium graph $G$ and let $H$ be the connected component that $c$ defines. Then the length of $c$ is at most $5$ and when it is $4$ or $5$, $|W_H(x)| = |W_H(y)|$ for any $x,y \in V(H)$.
\end{lemma}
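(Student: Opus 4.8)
The plan is to exploit the cactus structure: in a cactus graph, a cycle $c$ is a biconnected component, so every vertex $x \in V(c)$ is either interior to the cycle with $W_H(x) = \{x\}$, or a cut vertex carrying a pendant subgraph $W_H(x)$. First I would argue that any $W_H(x)$ with $|W_H(x)| > 1$ is itself a "cactus with a distinguished root", and that by Lemma~\ref{lem:bridge} the immediate neighbourhood of $x$ inside $W_H(x)$ behaves like a star; more usefully, for the swap arguments I only need a crude lower bound $|W_H(x)| \geq 2$ whenever $x$ is a genuine cut vertex with something attached. Let the cycle be $c = x_0 - x_1 - \cdots - x_{k-1} - x_0$ of length $k$.

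For the length bound, I would suppose $k \geq 6$ and consider, for a well-chosen vertex $x_i$ on the cycle, the deviation in which $x_i$ swaps one of its two cycle edges, say $x_i x_{i+1}$, for the chord $x_i x_j$ where $x_j$ is the vertex (roughly) antipodal to $x_i$ on the cycle. Swapping $x_i x_{i+1}$ for $x_i x_j$ creates a new cycle through $x_i$ and $x_j$; the distance from $x_i$ to every vertex on the "far arc" (those closer to $x_{i+1}$ along the cycle) drops by a controlled amount, while distances to the near arc and to all pendant subgraphs hanging off vertices of the far arc only drop or stay the same — the only vertices that can move further away are those in $W_H(x_{i+1})\setminus\{\text{reachable part}\}$, but since $x_i x_{i+1}$ was a cycle edge (not a bridge), $x_{i+1}$ is still reachable from $x_i$ via the rest of the cycle, so in fact $W_H(x_{i+1})$ suffers at most a bounded loss. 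Counting: the gain over the far arc of length $\approx k/2$ is quadratic-ish in $k/2$ (it's $\sum_{t} (\text{old dist} - \text{new dist})$, a sum like $1+2+\cdots$), whereas the total loss is at most linear, so for $k \geq 6$ the cost difference is negative, contradicting equilibrium. This is essentially the same mechanism by which a long induced cycle cannot survive; the cactus hypothesis is what guarantees the chord $x_i x_j$ is a genuine new edge and that the pendant masses are organized along the cycle in a controlled way.

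For the second claim, suppose $k \in \{4,5\}$ and that two cycle vertices $x$ and $y$ have $|W_H(x)| \neq |W_H(y)|$; say $|W_H(y)| > |W_H(x)|$. I would take a leaf-type node $v \in W_H(y)$ adjacent to $y$ (using Lemma~\ref{lem:bridge} to know $W_H(y)$ is "star-like near $y$", or more simply just any neighbour of $y$ in $W_H(y)$ when $|W_H(y)|>1$), and let $v$ swap the edge $vy$ for the edge $vx$. Since $k \le 5$, $x$ and $y$ are at distance $\le 2$ on the cycle, so from $v$'s new vantage point $x$, the distance to $y$ (hence to all of $W_H(y)\setminus\{v\}$) increases by at most $2$ while $v$ now reaches everything in $W_H(x)$ one step closer and, because the cycle is short, reaches the rest of $G$ through $x$ with essentially the same distances $y$ had. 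The key inequality will take the shape
\[
\Delta C \;\leq\; \bigl(|W_H(x)| - 1\bigr)\cdot(\text{small}) \;-\; \bigl(|W_H(y)| - 1\bigr)\cdot(\text{small}) \;+\; (\text{bounded correction}),
\]
and when the asymmetry $|W_H(y)| - |W_H(x)|$ is large this is negative; when it is small I would instead need to symmetrize — perform the analogous swap at a node of $W_H(x)$ toward $y$ and add the two cost differences, as in the proof of Lemma~\ref{lemm:bridges-triangles}, so that the ambiguous $\pm$ terms cancel and a strictly negative residue survives. The fact that $|W_H(x)|=|W_H(y)|$ is forced (rather than merely $\min = 1$ as in the triangle case) is because for $k=4,5$ the cycle itself provides enough "slack" that an imbalance in either direction is exploitable.

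The main obstacle I expect is bookkeeping the distance changes for pendant subgraphs hanging off the far arc when doing the chord swap in the length argument: one must be sure that attaching the chord $x_i x_j$ never pushes any pendant mass on the near side further from $x_i$ by more than a constant, and that the quadratic gain on the far side genuinely dominates the (at most linear) total loss. Choosing $x_j$ to be exactly the cycle vertex maximizing $|W_H(x_j)|$ among the far arc, and choosing which of the two cycle edges at $x_i$ to swap based on which side carries more pendant mass, should make the accounting go through; getting the constants to force $k \le 5$ rather than, say, $k \le 7$ will require being slightly careful about the parity of $k$ and about the fencepost vertices where the two arcs meet.
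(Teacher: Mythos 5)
There is a genuine gap in both halves of your argument, and the missing idea in each case is the same: instead of hunting for one cleverly chosen improving swap, the paper sums the cost differences of \emph{all} the one-step shortcut swaps $u_iu_{i+1}\to u_iu_{i+2}$ around the cycle and lets the pendant masses telescope. For the length bound, your antipodal-chord accounting does not work: the ``gain'' and ``loss'' are both weighted by the arbitrary pendant masses $|W_H(x_t)|$, so neither is ``quadratic in $k$'' nor ``linear'' in any usable sense. Concretely, after deleting $x_ix_{i+1}$ and adding the chord $x_ix_j$, the distance from $x_i$ to $x_{i+1}$ jumps from $1$ to $\min(k-1,\,j)\approx k/2$, so $W_H(x_{i+1})$ suffers a loss of order $(k/2)\cdot|W_H(x_{i+1})|$, not a ``bounded loss''; moreover the vertices $x_2,\dots,x_{\lfloor j/2\rfloor}$ (and their pendants) also move farther, contradicting your claim that only the far side can lose. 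A symmetric mass distribution on a $6$-cycle (e.g.\ $|W_1|=|W_3|=|W_5|=M$, the rest trivial) makes every antipodal swap exactly cost-neutral, while the one-step swap $u_0u_1\to u_0u_2$ gives $\Delta C\le |W_1|-|W_2|-|W_3|=-1$. The paper's bound $\Delta C_i\le |W_{i+1}|-|W_{i+2}|-|W_{i+3}|$ for $l>5$ sums over $i$ to $-\sum_i|W_i|<0$, so some individual swap must be improving; no single prescribed swap can be shown improving in isolation.

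The second claim has the same problem. Your pendant-vertex swap $vy\to vx$ plus Lemma~\ref{lemm:bridges-triangles}-style symmetrization can only yield conclusions of the form $\min(|W_H(x)|,|W_H(y)|)=1$, and the symmetrization is unavailable exactly when one side has $|W_H(x)|=1$, which is the case you must still rule out to get full equality. Indeed, on a $4$-cycle with $|W_1|=M$ and the other three masses equal to $1$, no swap performed by a pendant vertex of $u_1$ is improving (they all cost at least $M-2\ge 0$); the only improving deviation is by the \emph{cycle} vertex $u_3$ swapping $u_3u_0$ for $u_3u_1$. The paper gets equality for $l\in\{4,5\}$ directly from the same one-step swaps: there $\Delta C_i=|W_{i+1}|-|W_{i+2}|$ exactly, equilibrium forces each $\Delta C_i\ge 0$, and the resulting cyclic chain of inequalities $|W_{i+1}|\ge|W_{i+2}|$ collapses to equality all around. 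You would need to import this averaging-over-the-cycle mechanism to close both gaps.
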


\begin{proof}
Suppose that $u_0,u_1,\ldots,u_{l-1}$ are the consecutive vertices defining $c$ with $l$ being the length of the cycle and let $W_i = W_H(u_i)$  for each $i=0,\ldots,l-1$. When taking the subindices modulo $l$ and swapping the edge $u_iu_{i+1}$ for the edge $u_iu_{i+2}$, the value of the corresponding cost difference $\Delta C_i$ is at most $|W_{i+1}|-|W_{i+2}|-|W_{i+3}|$ if $l > 5$ or $|W_{i+1}|-|W_{i+2}|$ if $4 \leq l \leq 5$. Therefore, $\Delta C_0 +\cdots + \Delta C_{l-1} < 0$ if $l > 5$ and in this case $G$ cannot be a sum basic equilibrium. Whereas if $4 \leq l \leq 5$, then $\Delta C_0 +\cdots+\Delta C_{l-1} = 0$ and since in this case $\Delta C_i = |W_{i+1}|-|W_{i+2}|$, then we must have $|W_i| = |W_j|$ for every $0 \leq i \leq j \leq l-1$ as claimed.
\end{proof}

\begin{corollary}
\label{corol:cycle}
Any cactus graph $G$ that is a sum basic equilibrium contains at most one cycle of length strictly greater than $3$.
\end{corollary}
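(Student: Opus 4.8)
The plan is to argue by contradiction, leveraging Lemma \ref{lemm:cycle} to pin down what a cycle of length $\geq 4$ forces on the local structure. Suppose a cactus sum basic equilibrium $G$ contained two distinct cycles $c$ and $c'$, each of length strictly greater than $3$; by Lemma \ref{lemm:cycle} each has length $4$ or $5$. Let $H$ and $H'$ be the connected components (in the sense used in Lemma \ref{lemm:cycle}, i.e. the $2$-edge-connected components) defined by $c$ and $c'$. The key leverage from Lemma \ref{lemm:cycle} is the rigidity statement: along $c$ we have $|W_H(x)| = |W_H(y)|$ for all $x,y \in V(H)$, and similarly $|W_{H'}(x)| = |W_{H'}(y)|$ for all $x,y \in V(H')$. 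Writing $n = |V(G)|$ and noting that $\{W_H(x)\}_{x \in V(H)}$ partitions $V(G)$, the common value $|W_H(x)|$ must equal $n / |V(H)|$; in particular $|V(H)|$ divides $n$, and the same for $|V(H')|$.

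First I would handle the case where $c$ and $c'$ share a vertex or are joined by a short path, using the cactus property. Since $G$ is a cactus, $c$ and $c'$ share at most one vertex, so they lie in different $2$-edge-connected components $H \neq H'$. Pick a vertex $z_0 \in V(H')$ that is "closest'' to $H$: concretely, travel from $H$ along the tree of $2$-edge-connected components until reaching $H'$, and let $z_0$ be the entry vertex of $H'$. Then every other vertex $z$ of $H'$ satisfies $W_{H'}(z) = \{z\}$ (it cannot hang any further subtree, since the rest of $G$, and in particular $H$, attaches through $z_0$) — wait, that is exactly the point: the rigidity forces $|W_{H'}(z)| = |W_{H'}(z_0)|$ for all $z \in V(H')$, yet $W_{H'}(z_0)$ contains the whole of $V(H)$ (which has at least $4$ vertices) while $W_{H'}(z)$ for $z \neq z_0$ on $c'$ can only contain $z$ together with whatever hangs off $z$ outside $H'$. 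So either all such $W_{H'}(z)$ are large and equal, forcing many pendant subtrees of the same size distributed around $c'$, or we get a contradiction. The clean way to finish is: the rigidity around $c'$ says all $|W_{H'}(z)|$ are equal to some $m \geq |V(H)| \geq 4$; the rigidity around $c$ says all $|W_H(x)|$ are equal to some $m' \geq |V(H')| \geq 4$; but $W_{H'}(z_0) \supseteq V(H)$ gives $m \geq m'$, and symmetrically (taking the entry vertex $x_0$ of $H$ from the direction of $H'$, so $W_H(x_0) \supseteq V(H')$) $m' \geq m$, hence $m = m'$ and moreover $W_{H'}(z_0) = V(H)$ exactly and $W_H(x_0) = V(H')$ exactly, meaning $H$ and $H'$ are joined directly by the bridge $x_0 z_0$ (or $x_0 = z_0$) and $G = H \cup H'$ with nothing else.

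That reduces the problem to the explicit configuration $G = H \cup H'$ joined at a single vertex or bridge, with $|V(H)| = |V(H')| = 4$ or $5$ and all side-trees trivial except the one linking them. In this finite case I would exhibit an improving swap directly, contradicting equilibrium. For instance, with $c = u_0 u_1 u_2 u_3$ a $4$-cycle and $x_0 = u_0$ the attachment vertex, the rigidity already failed unless $|W_H(u_1)| = |W_H(u_0)| \geq 4$, but $W_H(u_1) = \{u_1\}$ has size $1$ — contradiction immediately, with no further swap needed. The main obstacle I anticipate is making the "entry vertex" argument fully rigorous: one must be careful that the $W_{H'}(z)$ for the non-entry vertices $z$ of $c'$ really are forced to be singletons or at least strictly smaller than $W_{H'}(z_0)$, which requires observing that in a cactus, any subtree hanging off such a $z$ is disjoint from $H$ and from $c'$ itself, so it contributes only its own (bounded) mass, whereas $z_0$ additionally absorbs all of $V(H)$. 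Once that asymmetry is established the equality forced by Lemma \ref{lemm:cycle} is violated, and the corollary follows.
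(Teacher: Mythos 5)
Your proposal is correct and follows essentially the same route as the paper: invoke the rigidity $|W_H(x)|=|W_H(y)|$ from Lemma \ref{lemm:cycle} for each of the two long cycles, and play the two ``attachment vertex'' containments against each other. The paper's version is slightly more direct: for $v\in c$ with $v\neq u$ one has $W_H(v)\subsetneq W_{H'}(u')$ (strictly, since $u\in W_{H'}(u')\setminus W_H(v)$), giving $W<W'$ and symmetrically $W'<W$ at once, so the reduction to the degenerate configuration $G=H\cup H'$ and the final explicit case analysis in your write-up are unnecessary.
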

\begin{proof}
Suppose the contrary, let $c, c'$ be two cycles from $G$ of length greater than $3$ and let $H$, $H'$ be the corresponding two biconnected components that $c,c'$ define, respectively. 

On the one hand, there exist positive constants $W,W'$ such that $|W_H(v)| = W$ and $|W_{H'}(v')| = W'$ for any $v \in c$ and any $v' \in c'$. 

On the other hand, since $G$ is a cactus graph by hypothesis there exist nodes $u \in c$ and $u' \in c'$ such that $c' \subseteq W_{H}(u)$ and $c \subseteq W_{H'}(u')$. 

If $v \in c$ and $v \neq u$, $W_H(v) \subseteq W_{H'}(u')$ and then $W = |W_H(v)| < |W_H(u')| = W'$. Then, by symmetry, $W' < W$ and therefore we have reached a contradiction.  
\end{proof}

Now, we can state the last result of this paper:

\begin{theorem}
If a sum equilibrium graph $G$ in the basic network-creation game is a cactus graph, then it has diameter at most $2$.
\end{theorem}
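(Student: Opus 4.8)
The plan is to combine Corollary~\ref{corol:cycle}, Lemma~\ref{lemm:cycle} and the bridge lemma (Lemma~\ref{lem:bridge}) to show that a cactus sum basic equilibrium $G$ of diameter at least $3$ cannot exist. First I would suppose, for contradiction, that $\mathrm{diam}(G) \geq 3$. Since every block of a cactus is either an edge (a bridge) or a cycle, a long geodesic must pass through several blocks; the strategy is to locate a witness vertex that has a profitable swap.

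The key case distinction is on the blocks. By Corollary~\ref{corol:cycle}, at most one block of $G$ is a cycle of length $>3$; every other block is either a bridge or a triangle. I would first treat the case where $G$ contains a triangle $T = xyz$ that is not the whole graph: then some vertex of $T$, say $x$, is a cut vertex with $W_H(x)\neq\{x\}$, where $H$ is the biconnected component equal to $T$. Pick $v$ a neighbour of $x$ outside $T$ (using Lemma~\ref{lem:bridge} to control the shape of $W_H(x)$), and consider the swap of the edge $vx$ for $vy$: this behaves exactly as in the proof of Lemma~\ref{lemm:bridges-triangles}, because $xy\in E(H)$. Summing the cost differences of the symmetric swaps $vx\to vy$ and $v'y\to v'x$ (for $v'$ a neighbour of $y$ outside $T$, if one exists) gives a strictly negative total, exactly as in that lemma; and if only one side has an external neighbour, a single swap already suffices since a triangle shortcut reaches the far side at no loss. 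This handles triangles. Bridges are handled directly by Lemma~\ref{lem:bridge}: a bridge $uv$ forces $\deg u = 1$ or $\deg v=1$, so no bridge can sit in the ``middle'' of a geodesic of length $\geq 3$ with mass on both sides.

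What remains is the case where the only block responsible for diameter $\geq 3$ is the unique long cycle $c$, of length $4$ or $5$ by Lemma~\ref{lemm:cycle}. If $c$ is the entire graph then $\mathrm{diam}(G) = \lfloor l/2\rfloor \le 2$ for $l\le 5$ — wait, for $l=5$ this is $2$ and for $l=4$ it is $2$, so $c$ alone never gives diameter $\ge 3$; hence some vertex $u_i$ of $c$ has $W_H(u_i)\neq\{u_i\}$. But Lemma~\ref{lemm:cycle} asserts $|W_H(x)| = |W_H(y)|$ for all $x,y\in V(H)$ when $l\in\{4,5\}$, so \emph{every} vertex of $c$ carries the same nonempty pendant mass $W = |W_H(u_i)| \ge 2$. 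Now pick a pendant vertex $v$ adjacent to some $u_i$ (a leaf of the star $W_H(u_i)$, by Lemma~\ref{lem:bridge}) and perform the swap $vu_i \to vu_{i+1}$ (or $vu_{i-1}$), then sum over $i$ the analogous swaps as in Lemma~\ref{lemm:cycle}, now taking the pendant masses into account: the dominant term becomes $\sum_i (|W_{i-1}| - |W_{i+1}|) = 0$ from the cycle vertices but one picks up a strictly negative contribution of $-1$ per pendant vertex moved one step along the cycle toward the opposite side. Since all $W_i$ are equal and at least one exceeds $1$, this produces a strictly negative aggregate cost difference, contradicting equilibrium.

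The main obstacle I expect is the bookkeeping in this last case: when we move a pendant leaf $v$ from $u_i$ to $u_{i+1}$, we must carefully account for the distances from $v$ to the other pendant stars hanging off the cycle, not just to the cycle vertices themselves, and verify that the concavity/averaging trick (as used in the bipartite section, dividing by $\deg_H(\cdot)-1$ and summing symmetric swaps) still forces the total strictly below zero rather than merely to zero. The cleanest route is probably to avoid re-deriving everything and instead reduce to the already-proven lemmas: show that diameter $\ge 3$ forces the existence of a configuration matching the hypotheses of Lemma~\ref{lemm:bridges-triangles} (two adjacent cut vertices each with nontrivial pendant world) — which is immediate once we know all cycle vertices carry equal nonzero mass and a cycle has adjacent vertices — and invoke that lemma directly for the contradiction. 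I would present the proof in that reduced form.
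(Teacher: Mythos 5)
Your final ``reduced form'' is essentially the paper's own proof: split on whether $G$ has a cycle of length greater than $3$ (if not, $G$ is a block graph and Corollary~\ref{corol:block} applies), and otherwise combine Lemma~\ref{lemm:cycle}'s equal-mass conclusion on the unique long cycle with Lemma~\ref{lemm:bridges-triangles} to force every $W_H(u_i)$ to be trivial, so $G$ is the $4$- or $5$-cycle itself and has diameter $2$. The only blemish is the aside that when just one triangle vertex has an external neighbour ``a single swap already suffices'' --- that swap is in fact not profitable --- but this case never produces diameter $\geq 3$, so nothing in your reduced argument depends on it.
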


\begin{proof} 
If $G$ does not contain any cycle of length strictly greater than $3$ then $G$ is a block graph and the result follows by Corollary \ref{corol:block}. Otherwise, by Corollary \ref{corol:cycle}, $G$ contains exactly one cycle of length $4$ or $5$, let it be $c$ and let $H$ be the corresponding biconnected component that $c$ defines. Now let $u_1,u_2 \in V(H)$ be two consecutive nodes from $c$ and let $W_1 = W_H(u_1)$ and $W_2=W_H(u_2)$. 

On the one hand, by Lemma \ref{lemm:cycle} it follows that $|W_1|=|W_2|$. On the other hand, by Lemma \ref{lemm:bridges-triangles}, we have that $|W_1| = 1$ or $|W_2|=1$. Thus $|W_1|=|W_2|=1$, and since this applies for any two consecutive nodes from the cycle defined by $H$, we conclude that $G = H$. Therefore, as claimed, the diameter of $G$ is at most $2$.  
\end{proof}

%\section{Aida: a few ideas to try in the future}

%\subsection{Equilibrium existence}

%Wile for some more popular games like the network creation game theoretic results have been found (see for instance \cite{AFM2009}), not much is known for the sum basic games. Thus:
%\begin{itemize}
%    \item Study Strong Price of Anarchy
%    \item Strong Nash Equilibrium
%    \item Price of Stability (WORK IN PROGRESS< BEING CHECKED ALREADY)
%\end{itemize}

%%%%%%%%%%%%%%%%%%%%%%%%%%%%%%%%%%%%%%%%%%%%%%%%%%%%%%%%%%%%%%%%%%%%%%%%%%%%%%%%%%
\section{Concluding remarks}
%%%%%%%%%%%%%%%%%%%%%%%%%%%%%%%%%%%%%%%%%%%%%%%%%%%%%%%%%%%%%%%%%%%%%%%%%%%%%%%%%%
First of all, it is important to note that dealing with swap deviations can be extremely challenging. This is illustrated, for instance, in the proofs of a constant price of anarchy for the sum classical network creation game which appear in \cite{Demaineetal:07} and \cite{Alvarezetal3}. In those proofs, the deviations considered only involve buying links or deleting a subset of at least two links and buying a link back to some other node. Thus, our research provides a deeper understanding of these deviations, contributing to the advancement of the needed methods to analyse sum basic network creation games.

Our work shows that sum basic equilibrium graphs have diameter at most 2, regardless of whether the graph is bipartite, a block graph or a cactus graph. This extends previous results for trees by Alon et al. \cite{AlonDHL:10}. Can this result be generalised to other non-bipartite graph classes? This remains an open question, and we hope that our contribution will inspire further research on this problem. 

%Overall, our investigation has contributed to a deeper understanding of these complex structures and their behavior, and we look forward to seeing where this knowledge will take us in the future.

%%%%%%%%%%%%%%%%%%%%%%%%%%%%%%%%%%%%%%%%%%%%%%%%%%%%%%%%%%%%%%%%%%%%%%%%%%%%%%%%%%
\subsection*{Acknowledgements}
%%%%%%%%%%%%%%%%%%%%%%%%%%%%%%%%%%%%%%%%%%%%%%%%%%%%%%%%%%%%%%%%%%%%%%%%%%%%%%%%%%
Aida Abiad is partially supported by the Dutch Research Council through the grant VI.Vidi.213.085 and by the Research Foundation Flanders through the grant 1285921N. 
Arnau Messegu\'e is supported in part by grants Margarita Sala and 2021SGR-00434

A preliminary version of this paper appeared in the proceedings of the 2021 \emph{European Conference on Combinatorics, Graph Theory and Applications} (EuroComb 2021).

%%%%%%%%%%%%%%%%%%%%%%%%%%%%%%%%%%%%%%%%%%%%%%%%%%%%%%%%%%%%%%%%%%%%%%%%%%%%%%%%%%

\end{document}